\newcommand\be{\begin{equation}}
\newcommand\ee{\end{equation}}
\newcommand\bea{\begin{eqnarray}}
\newcommand\eea{\end{eqnarray}}
\newcommand\bi{\begin{itemize}}
\newcommand\ei{\end{itemize}}
\newcommand\ben{\begin{enumerate}}
\newcommand\een{\end{enumerate}}
\newtheorem{thm}{Theorem}[section]
\newtheorem{exa}[thm]{Example}
\newtheorem{defi}[thm]{Definition}
\newtheorem{rek}[thm]{Remark}
\newcommand{\tbf}[1]{\textbf{#1}}
\numberwithin{equation}{section}
\begin{document}

\title{Benfordness of the Generalized Gamma Distribution}

\author{Zelong Bi}
\email{zelong.bi@student.unsw.edu.au} \address{School of Mathematics \& Statistics, University of New South Wales, Sydney NSW, 2052}
\author{Irfan Durmi\'{c}}
\email{Irfan.Durmic@williams.edu} \address{Department of
  Mathematics and Statistics, Williams College, Williamstown, MA 01267}
\author{Steven J. Miller}
\email{Steven.J.Miller@williams.edu} \address{Department of
  Mathematics and Statistics, Williams College, Williamstown, MA 01267}

\keywords{Benford's Law, Generalized Gamma Distribution, Digit Bias, Poisson Summation}  

\thanks{This work was done as part of the Benford Summer 2021 Group led by Professor Steven J. Miller. We would like to thank everyone involved with the Polymath Jr. REU for enabling students to do meaningful research even during a pandemic. The second author was supported by the Williams College John \& Louise Finnerty Class of 1971 Fund for Applied Mathematical Research}

\begin{abstract}
 The generalized gamma distribution shows up in many problems related to engineering, hydrology as well as survival analysis. Earlier work has been done that estimated the deviation of the exponential and the Weibull distribution from Benford's Law. We give a mathematical explanation for the Benfordness of the generalized gamma distribution and present a measure for the deviation of the generalized gamma distribution from the Benford distribution. 
\end{abstract}

\date{\today}

\maketitle

\tableofcontents


\section{Introduction and an Overview of the Theory} \label{sec:overviewoftheory}

At the dawn of the $20^{th}$ century, the astronomer and mathematician Simon Newcomb 
observed that the logarithmic books at his workplace showed a lot of wear and tear at the 
early pages, but the more he progressed through the book, the less usage could be observed.
Newcomb deduced that his colleagues had a "bias" towards numbers starting with the digit 
$1$. In particular, the digit $1$ shows up as the first digit roughly $30\%$ of the time, 
the digit $2$ about $9\%$ of the time, and so on. While he did come up with a mathematical 
model for this interesting relationship, his work stayed mostly unnoticed.

It took another 57 years after Newcomb's discovery for physicist Frank Benford to make the exact same observation as Newcomb: the first pages of logarithmic tables were used far more than others. He formulated this law as follows. 
\begin{defi}\cite[Page 554]{Ben} \label{def:benforiginal}
The frequency of first digits follows closely the logarithmic relation:
\bea
  F_d \ = \log_{10}{\frac{d \ + \ 1}{d}}, \label{eqn:benorg}
\eea
where $d$ represents the leading digit, and $F_d$ represents the frequency of the digit $d$.
\end{defi}

Nowadays, Benford's Law is used in detecting many different forms of fraud, and its prevalence in the world fascinates not only mathematicians, but many other scientists as well (to learn more about Benford's Law and its many applications, we recommend~\cite{BeHi, Nig, Mil} to name a few). 


Many mathematicians have tried to explain the prevalence of Benford's law in the real 
world. Some have shown that samples coming from certain probability distributions tend to 
demonstrate Benfordness~(\cite{MiNi2,CLM12}). We adopt the same methodology in this article
and prove that data coming from a generalized gamma distribution is likely to be close to
Benford's law, and provide an explicit formula to bound the deviation. 

\begin{rek} \label{rek:meaningofbenfordness}
It is worthwhile to make a quick comment about Benfordness. For any finite set, it is impossible to have a perfect fit so we analyze whether or not such a data set is close to Benford which is enough for most applications. If we let the size of the data set tend to infinity, then there is a chance it will converge to Benford. 
\end{rek}


We start with a quick overview of the theory.

\subsection{Benford's Law} \label{subsec:benflaw}


While the formulation of Benford's Law, as presented in Definition~\ref{def:benforiginal}, does have its merits, we want to move away from the idea of frequency and data sets to develop a more probabilistic formualation of the theory. We first present a more complete definition of the Benford distribution for base $B$.
\begin{defi}\label{def:benflaw}
A set of numbers is said to satisfy Benford's law if the leading digit $d \in \{1, 2, ... , B - 1\}$ occurs with frequency $F_d \ = \log_{B}{\frac{d \ + \ 1}{d}}$, where $B \geq 2$.
\end{defi}

\begin{rek}\label{rek:multipledigits}
Note that this is the most common way of stating Benford's law.  A more general version of the law also describes the frequencies of different second digits, third digits and so on. One could also give an expression for the probability of a digit occuring in the $n^{th}$ spot of a number, as is described in the first chapter of~\cite{Mil}. This is related to the so-called Strong Benford's Law which states that the probability of observing a significand of at most $x$ in base $B$ is equal to $\log_{B}{x}$. The distribution of just the first digit, as well as the distribution of the entire significand, is often referred to as just Benford's law. For the purposes of our paper, this difference is inconsequential.
\end{rek}


We now move on to the idea of base and scientific notation. Given a base $B \geq 2$, any nonzero real number $r$ can be uniquely expressed in the form $r \  =  \ aB^n$, where $|a| \in [1, B), n \in \mathbb{Z}$. This is usually referred to as \tbf{scientific notation}, and it motivates the definition of the \tbf{significand}.

\begin{defi} \label{def:significand}
Given a base $B \geq 2$, we define the significand as the mapping $S_B: \mathbb R_{\ne 0} \rightarrow [1,B)$, where $S_B(x)$ is the significand of any input $x \  =  \ aB^n$ written in scientific notation with $|a| \in [1,B)$ and $n \in \mathbb{Z}$. It follows that $S_B(x) \ = \ |a|$ is the \tbf{significand} and $n$ is the \tbf{exponent}.
\end{defi}

One also studies the \tbf{mantissa}, which is the fractional part of the logarithm. 

\begin{exa} \label{exa:digit}
As an example, let $x \ = \ 31295192$. When we write this in scientific notation using base $10$, it becomes $x \ = \  3.1295192 \cdot 10^7$ and it follows that $S_B(3.1295192 \cdot 10^{7}) \ = \ 3.1295192$ is the significand, and $7$ is the exponent. Furthermore, observe that $\log_{10}{31295192} \ \approx \ 7.495477620349604$ so the mantissa is about $0.495477620349604$.
\end{exa}

We now formally introduce the notion of what it means for a random variable to have the Benford distribution \footnote{Note that, throughout this paper, we denote that a random variable $X$ follows the Benford distribution base $B$ by stating that $X$ is Benford base $B$}. 

\begin{defi} \label{def:benford_1st}
A random variable $X$ is Benford base $B$ if $\mathrm{Prob}\left (X \leq x \right) \ = \ \log_{B} x$, where $x \in [1, B)$, $B \geq 2$. 
\end{defi}

Let $X: \Omega \rightarrow \mathbb{R}$, be a random variable with some particular cumulative distribution function (cdf) $F$. If the random variable $S_B \circ X$ is Benford (or close to Benford), then one would expect a data set coming from a population with distribution $F$ to satisfy Benford's law since

\bea \label{eqn:benfdefrandomvariable}
\mathrm{Prob} \left(X \text{ has leading digit } d \right) \ = \ \mathrm{Prob}\left(S_B \circ X \in [d, d + 1)\right) \ \ = \ \ \log_{B}\frac{d + 1}{d},
\eea
which is just a direct application of Definition~\ref{def:benford_1st}.

We could directly find the distribution of $S_B \circ X$ and compare it to the Benford distribution. Alternatively, the following well-known theorem, which can be found in~\cite{Dia}, provides an indirect method, which in some cases is more convenient.

\begin{thm} \label{thm:logmod1}
Given a base $B \geq 2$ and a non-negative random variable $X$, $S_B \circ X$ is Benford if and only if $\log_B X \bmod 1$ has a uniform $[0,1)$ distribution.
\end{thm}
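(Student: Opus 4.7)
The plan is to exploit the fact that applying $\log_B$ intertwines multiplication by powers of $B$ with addition of integers, so the significand map becomes the fractional part map in logarithmic coordinates. Concretely, writing any positive $X$ in the form $X = S_B(X) \cdot B^n$ with $n \in \mathbb{Z}$ and $S_B(X) \in [1,B)$, taking logs gives $\log_B X = \log_B S_B(X) + n$. Since $\log_B S_B(X) \in [0,1)$, this identifies
\[
\log_B X \bmod 1 \;=\; \log_B S_B(X).
\]
That identification is the engine of the whole proof; once it is in hand, both directions of the equivalence are just changes of variable.

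For the forward direction, assume $Y := \log_B X \bmod 1$ is uniform on $[0,1)$. For any $x \in [1,B)$, I would compute
\[
\mathrm{Prob}\bigl(S_B(X) \leq x\bigr) \;=\; \mathrm{Prob}\bigl(\log_B S_B(X) \leq \log_B x\bigr) \;=\; \mathrm{Prob}\bigl(Y \leq \log_B x\bigr) \;=\; \log_B x,
\]
where the last equality uses that $\log_B x \in [0,1)$ and that $Y$ is uniform on $[0,1)$. By Definition~\ref{def:benford_1st}, this says $S_B \circ X$ is Benford.

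For the converse, assume $S_B \circ X$ is Benford base $B$. For any $y \in [0,1)$, I would compute
\[
\mathrm{Prob}(Y \leq y) \;=\; \mathrm{Prob}\bigl(\log_B S_B(X) \leq y\bigr) \;=\; \mathrm{Prob}\bigl(S_B(X) \leq B^y\bigr) \;=\; \log_B(B^y) \;=\; y,
\]
where the third equality uses $B^y \in [1,B)$ together with Benfordness, and the monotonicity of $\log_B$ justifies the inversion in the second equality. This is exactly the CDF of the uniform distribution on $[0,1)$, completing the equivalence.

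There is no real obstacle; the only thing to be careful about is the boundary behavior and the treatment of $X = 0$. Since $X$ is assumed non-negative rather than strictly positive, one should either argue that $\{X=0\}$ is a measure-zero event under the Benford hypothesis (because $\log_B X \bmod 1$ being uniform on $[0,1)$ already presumes $X > 0$ almost surely, as $\log_B 0$ is undefined), or restrict attention to the event $\{X > 0\}$ and note that both statements in the theorem are implicitly conditioned on this event. With that caveat handled, the theorem reduces to the single observation that $S_B$ and the fractional-part-of-$\log_B$ map are conjugate via $B^{(\cdot)}:[0,1) \to [1,B)$, which is a measure-preserving bijection between the uniform law on $[0,1)$ and the Benford law on $[1,B)$.
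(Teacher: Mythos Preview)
Your proof is correct and follows essentially the same approach as the paper's: both reduce the equivalence to the fact that the events $\{S_B(X)\in[1,s)\}$ and $\{\log_B X \bmod 1\in[0,\log_B s)\}$ coincide. The only cosmetic difference is that you isolate this as the pointwise identity $\log_B X \bmod 1=\log_B S_B(X)$ and then push it through the CDFs, whereas the paper writes the same event as the union $\bigcup_{k\in\mathbb{Z}}\{X\in[B^k,sB^k)\}$ before comparing probabilities; your extra remark on the $X=0$ boundary case is a welcome clarification that the paper leaves implicit.
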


\begin{proof} \label{prf:logmod1}
For any $s \in [1, B)$, let $u \  =  \ \log_B s \in [0,1)$. Let us assume that $S_B \circ X$ is Benford, so the following holds:
\bea \label{benfderv1} 
\mathrm{Prob}\left(\log_B X \bmod 1 \in [0,u)\right)  & \ = \ & \mathrm{Prob}\left(\{X \in [1 \cdot B^k, s \cdot B^k): k\in 
\mathbb{Z}\}\right)\nonumber\\ & \ = \ & \mathrm{Prob}\left(S_B \circ X \in [1, s)\right) \nonumber \\ & \ = \ &
 \log_B s \nonumber \\ & \ = \ & u.
\eea
Hence it follows that $\log_B X \bmod 1$ has uniform $[0,1)$ distribution.


\noindent Let us now assume that  $\log_B X \bmod 1$ has uniform $[0,1)$ distribution. Then:
\bea \label{benfderv2} 
\mathrm{Prob}\left(S_B \circ X \in [1, s)\right)  & \ = \ & \mathrm{Prob}\left(\{X \in [1 \cdot B^k, s \cdot B^k): k\in 
\mathbb{Z}\}\right)\nonumber\\ & \ = \ & \mathrm{Prob}\left(\log_B X \bmod 1 \in [0,u)\right) \nonumber \\ & \ = \ &
u  \nonumber \\ & \ = \ & \log_B s.
\eea
\end{proof}

Theorem~\ref{thm:logmod1} forms the key foundation of our work since it reduces our problem substantially by allowing us to focus on a logarithmically rescaled random variable  $\bmod 1$. We use this result  to explore the relationship between the generalized gamma distribution and Benford's law.

It is exactly Theorem~\ref{thm:logmod1} which enables us to develop a measure for the 
deviation of a random variable from the Benford distribution. This deviation, which we 
denote as $D$, quantifies the deviation of the leading digits from their corresponding 
Benford counterparts. We derive an estimate for this deviation. 

Assume $X$ is a nonnegative random variable and that $S_B \circ Y$ is Benford. We now transform the random variable $X$ via the transformation noted in Theorem~\ref{thm:logmod1} to $\log_B X \bmod 1$, and denote its probability density function (pdf) as: $f_{\log_B X \bmod 1}$. From here it follows that 
\bea \label{benfdiff} 
D & \ = \ & \left|\mathrm{Prob}\left(X \text{ has leading digit } d \right) - \log_B \frac{d + 1}{d} \right|\nonumber \\
& \ = \ & |\mathrm{Prob}\left(S_B \circ X \in [d, d + 1)\right)  -  \mathrm{Prob}\left(S_B \circ Y \in [d, d + 1)\right)| \nonumber\\
& \ = \ & \left|\mathrm{Prob}\left(\{X \in [d \cdot B^k, (d + 1) \cdot B^k): k\in \mathbb{Z}\}\right) - \mathrm{Prob}\left(\{Y \in [d \cdot B^k, (d + 1) \cdot B^k): k\in \mathbb{Z}\}\right)\right|\nonumber\\
& \ = \ & \left|\mathrm{Prob}\left(\log_B X \bmod 1 \in [\log_B d,\log_B d + 1)\right) - \mathrm{Prob}\left(\log_B Y \bmod 1 \in [\log_B d,\log_B d + 1)\right)\right| \nonumber\\
& \ = \ &  
\left|\int_{\log_B d}^{\log_B d + 1} f_{\log_B X \bmod 1}(u) - 1 \ du\right|\nonumber\\
& \ \leq \ & \int_{0}^{1} \left|f_{\log_B X \bmod 1}(u) - 1 \right|du.
\eea

This expression allows us to bound the deviation of any random variable from the Benford distribution. We now present a quick theoretical overview of the generalized gamma distribution.


\subsection{The Generalized Gamma Distribution and Its Connection to Benford} \label{subsec:generalized gamma}

The work done by Miller and Nigrini in~\cite{MiNi2}, as well as the paper by Leemis, Schmeiser, and Evans~\cite{LSE}, explored the exponential distribution and how it relates to Benford's Law, whereas Cuff et.al in~\cite{CLM12} explored a similar form of a relationship between the Weibull distribution and Benford's Law. Both of these distributions can be seen as ``children'' of one parent distribution for a particular choice of parameters. 

For the purposes of this paper, we use the following definition of the generalized gamma distribution, as presented in~\cite{Sta}.

\begin{defi} \label{def:gengamma}
A random variable $X$ follows the generalized gamma distribution with parameters $a$, $d$, and $p$ if its cumulative distribution function (cdf) is of the form
\bea \label{eqn:gammacdf}
F(x;a,d,p) \ = \ \frac{\gamma \left(\frac{d}{p}, \left(\frac{x}{a}\right)^p\right)}{\Gamma\left(\frac{d}{p}\right)}, \ \ x > 0; \  a,d,p > 0,
\eea
where $\gamma$ is the lower incomplete gamma function, defined as
\bea \label{eqn:lowerincompgamma}
\gamma(s, x) \ := \ \int_0^x t^{s-1}e^{-t}dt.
\eea
The corresponding probability density function (pdf) is
\bea \label{eqn:gammapdf}
f(x;a,d,p) \ = \ \frac{\left(\frac{p}{a^d}\right)x^{d-1}e^{-(x/a)^p}}{\Gamma \left(\frac{d}{p}\right)}.
\eea
\end{defi}

We note that, when $d \ = \ p$, Equation~\eqref{eqn:gammacdf} is just the cdf of a Weibull distribution, and that is further reduced to the exponential distribution for the special case of $d \ = \ p \ = \ 1$. This is a very useful observation because it enables us to directly compare our results with the work completed in~\cite{MiNi2,CLM12}. 

Having covered the relevant background material, our goal now is to show the following key results, that we prove in Section~\ref{sec:mainresults}.

\begin{thm} \label{thm:main}
If $X$ is a random variable having the generalized gamma distribution with parameters $a, d, p$, then the pdf of $\log_B X \bmod 1$ is
\bea \label{logpdf1}
f_{\log_B X \bmod 1}(u) \ = \ \frac{p\ln B}{\Gamma\left(\frac{d}{p}\right)}\sum_{k=-\infty}^{+\infty}e^{-\left(\frac{B^{k+u}}{a}\right)^p}\left(\frac{B^{k+u}}{a}\right)^d,
\eea
or
\bea \label{logpdf2}
f_{\log_B X \bmod 1}(u) \ = \ 1 + \sum_{k=1}^{+\infty}\frac{2}{\Gamma\left(\frac{d}{p}\right)}\Re\left[e^{2\pi ui - \frac{2\pi i\ln a }{\ln B}}\Gamma\left(\frac{d}{p} - \frac{2\pi k i}{p\ln B}\right)\right].
\eea
where $u \in (0,1)$. Further, the scaling parameter $a$ has limited effect on the pdf, for any $m \in \mathbb{Z}$, $a$ and $a \cdot B^m$ result in the same pdf.
\end{thm}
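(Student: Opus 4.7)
The plan is to establish the three parts of the theorem in sequence: derive the kernel representation \eqref{logpdf1} directly from the change of variables formula, then apply the Poisson summation formula to recast it as the Fourier series \eqref{logpdf2}, and finally read off the $a$-invariance from either formula.

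First I would obtain \eqref{logpdf1}. Let $Z = \log_B X$. Since $X > 0$ almost surely, the change of variables $x = B^z$ (so $dx = B^z \ln B \, dz$) converts the pdf \eqref{eqn:gammapdf} of $X$ into the pdf
\[
f_Z(z) \ = \ f_X(B^z) \, B^z \ln B \ = \ \frac{p \ln B}{\Gamma(d/p)}\left(\frac{B^z}{a}\right)^d e^{-(B^z/a)^p}.
\]
For the reduced variable $U = Z \bmod 1$, the standard wrapping identity $f_U(u) = \sum_{k\in\mathbb{Z}} f_Z(u+k)$ (valid since $f_Z$ is nonnegative and integrable) immediately gives \eqref{logpdf1}.

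Next I would derive \eqref{logpdf2} by applying the Poisson summation formula to the function $g(t) := \frac{p \ln B}{\Gamma(d/p)}(B^t/a)^d e^{-(B^t/a)^p}$, so that $f_U(u) = \sum_k g(u+k) = \sum_n \widehat{g}(n)\, e^{2\pi i n u}$. The key calculation is the Fourier transform: substituting $s = (B^t/a)^p$ (so $t = \log_B a + \tfrac{\ln s}{p \ln B}$ and $dt = \tfrac{ds}{p s \ln B}$) transforms the integral into
\[
\widehat{g}(n) \ = \ \int_{-\infty}^{\infty} g(t) e^{-2\pi i n t}\, dt \ = \ \frac{e^{-2\pi i n \ln a/\ln B}}{\Gamma(d/p)} \int_0^\infty s^{\,d/p - 2\pi i n/(p \ln B) - 1} e^{-s}\, ds,
\]
and the remaining integral is $\Gamma\!\bigl(d/p - 2\pi i n /(p \ln B)\bigr)$ by analytic continuation of the Gamma integral in the complex parameter (valid because $d/p > 0$). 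The $n=0$ term contributes $1$, and pairing $n$ and $-n$ for $n \geq 1$ using $\Gamma(\overline{z}) = \overline{\Gamma(z)}$ collapses the complex-exponential sum to twice the real part, yielding \eqref{logpdf2}.

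The main technical obstacle is justifying Poisson summation; for this I would verify the standard sufficient conditions: both $g$ and $\widehat{g}$ enjoy rapid decay (the double exponential $e^{-(B^t/a)^p}$ handles $t \to +\infty$ and $(B^t/a)^d$ the decay at $t \to -\infty$, while Stirling-type bounds on $\Gamma$ along vertical lines handle $|n|\to\infty$), so the hypotheses are comfortably met. Finally, for the $a$-invariance, in \eqref{logpdf1} substituting $a \mapsto a B^m$ produces terms $(B^{k+u-m}/a)^d \exp\!\bigl(-(B^{k+u-m}/a)^p\bigr)$; re-indexing $k \mapsto k+m$ in the doubly-infinite sum recovers the original expression. (Equivalently, in \eqref{logpdf2} the factor $e^{-2\pi i k \ln(aB^m)/\ln B}$ differs from $e^{-2\pi i k \ln a/\ln B}$ by $e^{-2\pi i k m} = 1$.) This completes the plan.
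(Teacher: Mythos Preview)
Your proposal is correct and follows essentially the same route as the paper: derive \eqref{logpdf1}, apply Poisson summation with the same substitution to obtain \eqref{logpdf2}, and deduce the $a$-invariance from periodicity. The only difference is cosmetic: you obtain \eqref{logpdf1} directly via the density wrapping identity $f_U(u)=\sum_k f_Z(u+k)$, whereas the paper first writes down the cdf as a sum and then justifies term-by-term differentiation---your shortcut avoids that extra analytic step.
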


\begin{thm} \label{thm:error}
Given $\epsilon > 0$, and the second form of the pdf of $\log_B X \bmod 1$, i.e. Equation~\eqref{logpdf2} in Theorem~\ref{thm:main}, we have
$$f_{\log_B X \bmod 1}(u) \ \ = \ \ 1 + \sum_{k=1}^{+\infty}\frac{2}{\Gamma\left(\frac{d}{p}\right)}\Re\left[e^{2\pi ui - \frac{2\pi i\ln a }{\ln B}}\Gamma\left(\frac{d}{p} - \frac{2\pi k i}{p\ln B}\right)\right].$$
To approximate this function with main term $1$ and first $M$-term partial sum of the residue
\bea \label{psum}
f_{\log_B X \bmod 1}^{M}(u) & \ = \ & 1 + \sum_{k=1}^{M}\frac{2}{\Gamma\left(\frac{d}{p}\right)}\Re\left[e^{2\pi ui - \frac{2\pi i\ln a }{\ln B}}\Gamma\left(\frac{d}{p} - \frac{2\pi k i}{p\ln B}\right)\right],
\eea
to make sure the approximation error at any point $u \in (0,1)$ is bounded by $\epsilon$, $M$ should satisfy
\bea \label{Mbound}
M & \ > \ & \frac{(d + p)^2(\ln(B))^2}{2 \pi^2 \epsilon} - 1.
\eea
\end{thm}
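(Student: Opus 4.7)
The plan is to bound the tail of the series representation defining $f_{\log_B X \bmod 1}(u)$, namely the sum from $k=M+1$ to $\infty$, by $\epsilon$ and then solve for $M$. Since the real part is dominated by the modulus and $|e^{2\pi ui - 2\pi i\ln a/\ln B}|=1$, for every $u\in(0,1)$ the pointwise error satisfies
\[
\bigl|f_{\log_B X\bmod 1}(u)-f^{M}_{\log_B X\bmod 1}(u)\bigr|\ \leq\ \sum_{k=M+1}^{\infty}\frac{2}{\Gamma(d/p)}\left|\Gamma\!\left(\frac{d}{p}-\frac{2\pi ki}{p\ln B}\right)\right|,
\]
so the entire task reduces to extracting summable decay in $k$ from the Gamma factor.

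My key ingredient is the elementary inequality $|\Gamma(x+iy)|\leq \Gamma(x)$ valid for $x>0$, which comes from the triangle inequality applied to $\Gamma(z)=\int_{0}^{\infty}t^{z-1}e^{-t}\,dt$ together with $|t^{iy}|=1$. Used on its own this only gives a constant bound, which is not summable in $k$, so I would combine it with two applications of the recursion $\Gamma(z)=\Gamma(z+2)/[z(z+1)]$. Taking $z=d/p-iy$, the denominator has modulus at least $y^{2}$ and the shifted numerator is bounded by $\Gamma(d/p+2)$, producing
\[
\left|\Gamma\!\left(\frac{d}{p}-iy\right)\right|\ \leq\ \frac{\Gamma(d/p+2)}{y^{2}}.
\]

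Substituting $y=2\pi k/(p\ln B)$ and using the identity $\Gamma(d/p+2)/\Gamma(d/p)=(d/p)(d/p+1)=d(d+p)/p^{2}$, the tail collapses to
\[
\frac{d(d+p)(\ln B)^{2}}{2\pi^{2}}\sum_{k=M+1}^{\infty}\frac{1}{k^{2}}.
\]
Finally I would invoke the telescoping estimate $\sum_{k=M+1}^{\infty}1/k^{2}<\sum_{k=M+1}^{\infty}\bigl(\tfrac{1}{k-1}-\tfrac{1}{k}\bigr)=1/M$ and the crude bound $d(d+p)\leq(d+p)^{2}$ to obtain an error bound of $(d+p)^{2}(\ln B)^{2}/(2\pi^{2}M)$; demanding this be at most $\epsilon$ rearranges exactly to the advertised condition $M>(d+p)^{2}(\ln B)^{2}/(2\pi^{2}\epsilon)-1$.

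The principal obstacle is not arithmetic but choosing the right level of refinement in the Gamma bound: invoking the full Stirling-type decay $e^{-\pi|y|/2}$ would be overkill (and would give a far smaller sufficient $M$), whereas a single application of the recursion only yields $O(1/y)$ decay, producing a divergent tail. Applying the recursion \emph{twice} is the minimal move that both makes the tail summable and keeps the constants clean enough to invert explicitly for $M$.
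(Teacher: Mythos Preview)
Your argument is correct and follows the same overall architecture as the paper: bound the tail by the modulus of the Gamma factor, extract $O(k^{-2})$ decay, then sum. The only substantive difference is the device used to obtain the Gamma bound. The paper invokes the infinite-product identity
\[
|\Gamma(a+bi)|^{2}\ =\ \Gamma(a)^{2}\prod_{l\ge 0}\Bigl(1+\tfrac{b^{2}}{(a+l)^{2}}\Bigr)^{-1}
\]
and retains only the factors $l=0,1$, whereas you apply the recursion $\Gamma(z)=\Gamma(z+2)/[z(z+1)]$ together with the triangle-inequality bound $|\Gamma(x+iy)|\le\Gamma(x)$. These are two faces of the same coin (truncating the product at two factors is precisely equivalent to two applications of the functional equation), and both yield the same constant after your weakening $d(d+p)\le(d+p)^{2}$; your route has the virtue of being entirely elementary. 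For the tail $\sum_{k>M}k^{-2}$ you telescope to $1/M$ while the paper uses an integral comparison to reach $1/(M+1)$. One minor quibble: your bound literally gives the sufficient condition $M\ge (d+p)^{2}(\ln B)^{2}/(2\pi^{2}\epsilon)$, which is off by one from the stated $M>(d+p)^{2}(\ln B)^{2}/(2\pi^{2}\epsilon)-1$, so ``rearranges exactly'' slightly overstates the match; the discrepancy is harmless for a sufficient condition on an integer $M$.
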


One can gain a lot of intuition for the behavior of the distribution by analysing the graph of the pdf for different parameters. In particular, the parameters $d$ and $p$ determine the shape of the pdf, while the parameter $a$ determines the spread of the pdf. 

We also present some simulations, along with figures, that show how close the generalized gamma distribution comes to Benford's Law. The purpose of these figures is to show us that the deviation from Benford should be relatively low. 

Figure~\ref{fig:gammasample} compares the first-digit frequencies of $10000$ samples from a generalized gamma distribution, using the parameters: $B \ = \ 10, a \ = \ 2, d \ = \ 1, p \ = \ \frac{1}{2} $, with the frequencies predicted by Benford's law. Observe that Benford's law provides us with a remarkably good fit.

\begin{figure}[h]
\includegraphics[scale=0.55]{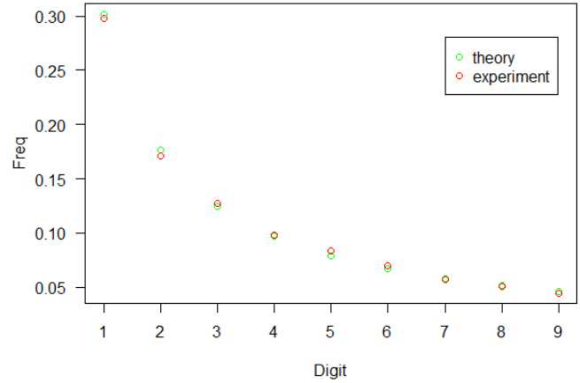}
\caption{First-digit frequencies of 10000 samples from a generalized gamma distribution $\left(B \ = \ 10, a \ = \ 2, d \ = \ 1, p \ = \ \frac{1}{2} \right).$}
\label{fig:gammasample}
\end{figure}


The Benfordness of samples coming from the generalized gamma distribtion can also be observed in a different way. We already stated in Theorem~\ref{thm:logmod1} that a random 
variable $X$ is Benford if and only if $\log_B X \bmod 1$ is uniform $[0,1)$. Since our claim is that $X$ is close to Benford if it follows the generalized gamma distribution, then
$\log_B X \bmod 1$ should have to be close to the uniform $[0,1)$ distribution. We explore this further using a Kolmogorov-Smirnov test.

The Kolmogorov-Smirnov test is used to examine whether or not a sample comes from a population with a specific distribution. The smaller the test statistic is, the more likely the sample came from the target distribution. We generated samples from the generalized gamma distribution with different values of parameters $d$ and $p$, and performed a Kolmogorov-Smirnov test to compare the transformed data ($\log_B X \bmod 1$) with the uniform $[0,1)$ distribution. The result is shown in Figure~\ref{fig:KS-test}. Observe that the test statistic is pretty small, indicating the (transformed) data came from a population with an approximately uniform $[0,1)$ distribution, and hence the original distribution is close to Benford. The match is better when $d$ and $p$ are small, which is reasonable considering that we know how close the exponential and Weibull distributions come to the Benford distribution. 

\begin{figure}[h]
\includegraphics[scale=.55]{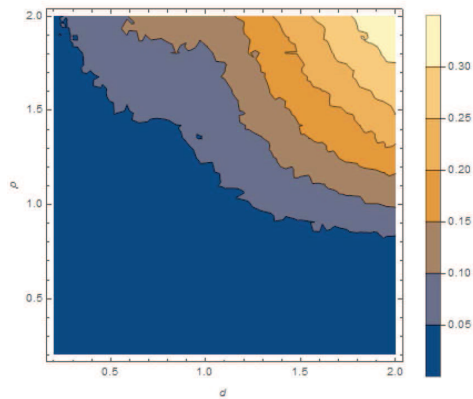}
\caption{Kolmogorov-Smirnov test results under different values of $d$ and $p$.}
\label{fig:KS-test}
\end{figure}



\section{Main Results and Key Observations} \label{sec:mainresults}

In this section, we prove the results of Theorem~\ref{thm:main} and Theorem~\ref{thm:error}, and justify the observations shown in Figures~\ref{fig:gammasample} and~\ref{fig:KS-test}.


\begin{proof}[Proof of Theorem \ref{thm:main}]
Given $u \in [0,1)$, we have
\bea \label{logpdfderv1} 
\mathrm{Prob}(\log_B X \bmod 1 \in [0,u]) & \ = \ & \sum_{k=-\infty}^{\infty}\mathrm{Prob}(\log_B X \in [k, k+u])\nonumber \\
& \ = \ & \sum_{k=-\infty}^{\infty}\mathrm{Prob}(X \in [B^k, B^{k+u}])\nonumber \\ & \ = \ & \frac{1}{\Gamma\left(\frac{d}{p}\right)}\sum_{k=-\infty}^{\infty}\int_{\left(\frac{B^k}{a}\right)^p}^{\left(\frac{B^{k+u}}{a}\right)^p}t^{\frac{d}{p} - 1}e^{-t} dt.
\eea

By using some results from analysis, the following can be verified.
\begin{enumerate}
\item The resulting function~\eqref{logpdfderv1} converges for all $u \in [0,1)$.
\item The resulting function~\eqref{logpdfderv1} is differentiable for all $u \in (0,1)$, and we can differentiate it term by term.
\end{enumerate}
Appendix~\ref{appendix:details_of_proof} provides the interested reader with more details of the proof as well as the mathematical machinery used in the paper.

We now work with the integral from~\eqref{logpdfderv1} to get the probability density function. Using the Fundamental Theorem of Calculus we find
\bea \label{logpdfderv2} 
\dv{u} \int_{\left(\frac{B^k}{a}\right)^p}^{\left(\frac{B^{k+u}}{a}\right)^p}t^{\frac{d}{p} - 1}e^{-t} dt & \ = \ & e^{-\left(\frac{B^{k+u}}{a}\right)^p}\left(\left(\frac{B^{k+u}}{a}\right)^p\right)^{\frac{d}{p} - 1}p\left(\frac{B^{k+u}}{a}\right)^{p-1}\frac{B^{k+u}\ln{B}}{a}\nonumber \\
& \ = \ & e^{-\left(\frac{B^{k+u}}{a}\right)^p}\left(\frac{B^{k+u}}{a}\right)^d p \ln{B},
\eea
and then plugging~\eqref{logpdfderv2} back into~\eqref{logpdfderv1}, we get that the pdf of $\log_B X \bmod 1$ is
\bea \label{logpdfresult} 
f_{\log_B X \bmod 1}(u) \ = \ \frac{p\ln B}{\Gamma\left(\frac{d}{p}\right)}\sum_{k=-\infty}^{+\infty}e^{-\left(\frac{B^{k+u}}{a}\right)^p}\left(\frac{B^{k+u}}{a}\right)^d.
\eea

Next we apply Poisson summation to~\eqref{logpdf1} to get the equivalent form~\eqref{logpdf2}, which is better since it is divided into a main term, $1$, which is what we want, and a residue term given by an infinite series. See Appendix~\ref{appendix:details_of_proof} or~\cite{CLM12} for details about Poisson summation.

For any $u \in (0,1)$, let $z \ = \ B^u$, $t \ = \ k$. 
We claim that 
\bea \label{claimforg(t)}
g(t) \ = \ p\ln{B}e^{-\left(\frac{B^tz}{a}\right)^p}\left(\frac{B^tz}{a}\right)
\eea
satisfies the conditions for applying Poisson summation. The details of why this is true can be found in Appendix~\ref{appendix:details_of_proof}, but the important insight is that we have the Fourier transform of this function:
\bea \label{logpdfderv3} 
\hat{g}(f) & \ = \ & \int_{-\infty}^{+\infty}p\ln{B}e^{-\left(\frac{B^tz}{a}\right)^p}\left(\frac{B^tz}{a}\right)^de^{-2\pi i tf} dt\nonumber \\
& \ = \ & \int_{0}^{\infty}e^{-\omega}\omega^{\frac{d}{p} - 1}\left(\frac{a\omega^{\frac{1}{p}}}{z}\right)^{-\frac{2\pi if}{\ln{B}}} d\omega \ \ \ ,\text{ where $\omega \ = \ \left(\frac{B^tz}{a}\right)^p$} \nonumber \\ & \ = \ & \left(\frac{z}{a}\right)^{\frac{2\pi i f}{\ln{B}}}\Gamma\left(\frac{d}{p} - \frac{2\pi i f}{p\ln{B}}\right).
\eea
We now proceed to apply Poisson summation to Equation~\eqref{logpdf1}, and at the end we get Equation~\eqref{logpdf2}

\bea \label{logpdfderv4}
f_{\log_B X \bmod 1}(u) & \ = \ & \frac{1}{\Gamma\left(\frac{d}{p}\right)}\sum_{k=-\infty}^{\infty}g(k)\nonumber\\
& \ = \ & \frac{1}{\Gamma\left(\frac{d}{p}\right)}\sum_{k=-\infty}^{\infty}\hat{g}(k)\nonumber\\
& \ = \ & \frac{1}{\Gamma\left(\frac{d}{p}\right)}\sum_{k=-\infty}^{\infty}\left(\frac{z}{a}\right)^{\frac{2\pi i k}{\ln{B}}}\Gamma\left(\frac{d}{p} - \frac{2\pi i k}{p\ln{B}}\right) \nonumber\\
& \ = \ & 1 + \sum_{k=1}^{\infty}\left[\left(\frac{z}{a}\right)^{\frac{2\pi i k}{\ln{B}}}\Gamma\left(\frac{d}{p} - \frac{2\pi i k}{p\ln{B}}\right) + \left(\frac{z}{a}\right)^{-\frac{2\pi i k}{\ln{B}}}\Gamma\left(\frac{d}{p} + \frac{2\pi i k}{p\ln{B}}\right)\right]\nonumber\\
& \ = \ & 1 + \sum_{k=1}^{+\infty}\frac{2}{\Gamma\left(\frac{d}{p}\right)}\Re\left[\left(\frac{z}{a}\right)^{-\frac{2\pi i k}{\ln{B}}}\Gamma\left(\frac{d}{p} - \frac{2\pi k i}{p\ln B}\right)\right] \ \ \text{(Note that $\Gamma\left(\overline{z}\right) \ = \ \overline{\Gamma\left(z\right)}$)} \nonumber\\
& \ = \ & 1 + \sum_{k=1}^{+\infty}\frac{2}{\Gamma\left(\frac{d}{p}\right)}\Re\left[e^{2\pi ui - \frac{2\pi i\ln a }{\ln B}}\Gamma\left(\frac{d}{p} - \frac{2\pi k i}{p\ln B}\right)\right].
\eea

Finally, observe that $e^{2\pi i x} \ = \ e^{2 \pi i (x + 1)}$ for all $x \in \mathbb{R}$, which verifies the scaling invariant property of $a$.
\end{proof}

\begin{rek} \label{rek:main}
It is worth noting that, when we reduce the results from Equation~\eqref{logpdf1} and Equation~\eqref{logpdf2} to the Weibull case, we retrieve the same results that were shown in the article by Cuff et.al~\cite{CLM12}.
\end{rek}

The following result, expressed as Theorem~\ref{thm:error}, enables us to estimate the value of the pdf numerically.


\begin{proof} [Proof of Theorem~\ref{thm:error}]
For any $M \geq 1$, the approximation error is
\bea \label{error1}
|\eta| & \ = \ & \left|\sum_{k=M+1}^{+\infty}\frac{2}{\Gamma\left(\frac{d}{p}\right)}\Re\left[e^{2\pi ui - \frac{2\pi i\ln a }{\ln B}}\Gamma\left(\frac{d}{p} - \frac{2\pi k i}{p\ln B}\right)\right]\right|\nonumber\\
& \ \leq \ & \sum_{k=M+1}^{+\infty} \frac{2}{\Gamma\left(\frac{d}{p}\right)}\left|\Gamma\left(\frac{d}{p} - \frac{2\pi k i}{p\ln B}\right)\right|,
\eea
where $u \in (0,1)$. 

The gamma function has the property that
\bea \label{error2}
\left|\Gamma(a + bi)\right|^2 & \ = \ & \left|\Gamma(a)\right|^2\prod_{k=0}^{\infty}\frac{1}{1 + \frac{b^2}{(a + k)^2}},
\eea
and applying this to~\eqref{error1}, we get
\bea \label{error3}
\left|\Gamma\left(\frac{d}{p} - \frac{2\pi k i}{p\ln B}\right))\right|^2 & \ = \ &
\left[\Gamma\left(\frac{d}{p}\right)\right]^2\prod_{l=0}^{\infty}\frac{1}{1 + \frac{\left(\frac{2\pi k}{p\ln{B}}\right)^2}{\left(\frac{d}{p} + l\right)^2}} \nonumber\\
& \ \leq \ & \left[\Gamma\left(\frac{d}{p}\right)\right]^2\prod_{l=0}^{1}\frac{1}{1 + \frac{\left(\frac{2\pi k}{p\ln{B}}\right)^2}{\left(\frac{d}{p} + l\right)^2}}\nonumber\\
& \ \leq \ & \left[\Gamma\left(\frac{d}{p}\right)\right]^2\frac{[(d+p)\ln{B}]^4}{(2\pi k)^4},
\eea
where the first inequality is because all terms in the product are positive numbers less than or equal to $1$. Finally we have
\bea \label{error4}
|\eta| & \ \leq \ & \sum_{k=M+1}^{+\infty} \frac{[(d + p)\ln{B}]^2}{2\pi^2k^2}\nonumber\\
& \ \leq \ & \int_{M+1}^{\infty} \frac{[(d + p)\ln{B}]^2}{2\pi^2x^2} dx\nonumber\\
& \ = \ & \frac{[(d + p)\ln{B}]^2}{2\pi^2(M+1)}.
\eea

Letting $|\eta| < \epsilon$, and using the result from~\eqref{error4}, we get the lower bound for $M$ as in~\eqref{Mbound}.
\end{proof}

With the help of~\eqref{benfdiff} and Theorem~\ref{thm:error}, for a random variable following the generalized gamma distribution with parameters $(a, d, p)$, we have the following for its deviation:
\bea \label{estimatedev}
D & \ = \ & \left|P(X \text{ has leading digit } d ) - \log_B \frac{d + 1}{d} \right|\nonumber\\
 & \ \leq \ & \int_{0}^{1} |f(u) - 1 |du\nonumber\\
& \ \leq \ & \int_{0}^{1} |f(u) - f_M(u) | du + \int_{0}^{1} |f_M(u) - 1 | du\nonumber\\
& \ \leq \ & \epsilon + \sup_{u \in (0,1)}|f_M(u) - 1 |,
\eea
where $f$ and $f_M$ are exact and approximate pdfs of $\log_B X \bmod 1$. Since we can control $\epsilon$ (which then determines $M$), and $\sup_{u \in (0,1)}|f_M(u) - 1 |$ can be evaluated (at least) numerically, we can get an upper bound for the difference of $S_B \circ X$ from the Benford distribution for any given parameters $a$, $d$ and $p$.

Figure~\ref{fig:logpdfs} shows the graphs of some approximate pdfs (with approximation error $< 0.01$) of $\log_B X \bmod 1$ with different parameters, which are pretty close to the constant function $1$. The term $1$ in~\eqref{logpdf2} plays a major role in the function. 

\begin{figure}[h]
\includegraphics[scale=1]{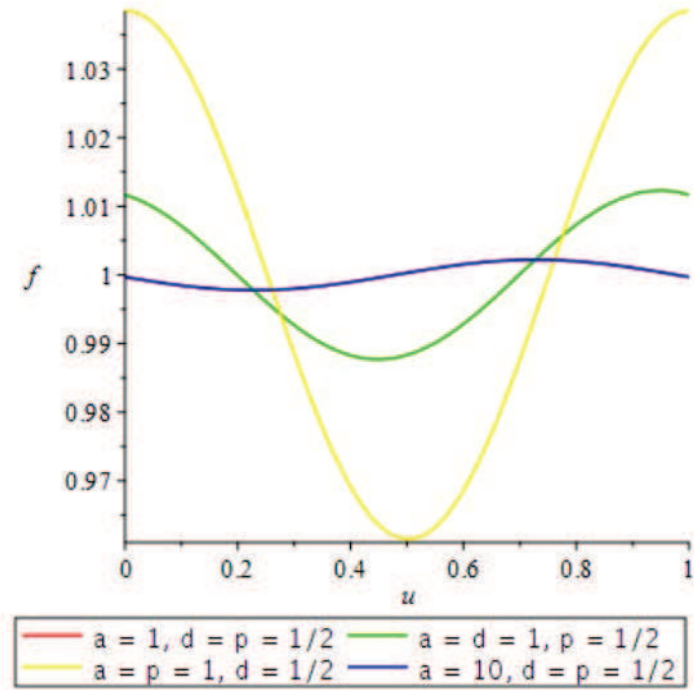}
\caption{Approximation of pdfs of $\log_B X \bmod 1$ with different parameters.}
\label{fig:logpdfs}
\end{figure}

Figures~\ref{fig:estimatedev_d} and~\ref{fig:estimatedev_p} show the upper bound of $D$ with respect to $d$ and $p$ according to~\eqref{estimatedev}. We see $X$ is very close to Benford when the parameters $d$ and $p$ are small, which is consistent with the Kolmogorov-Smirnov test we showed in Figure~\ref{fig:KS-test}.

\begin{figure}[h]
\includegraphics[scale=1]{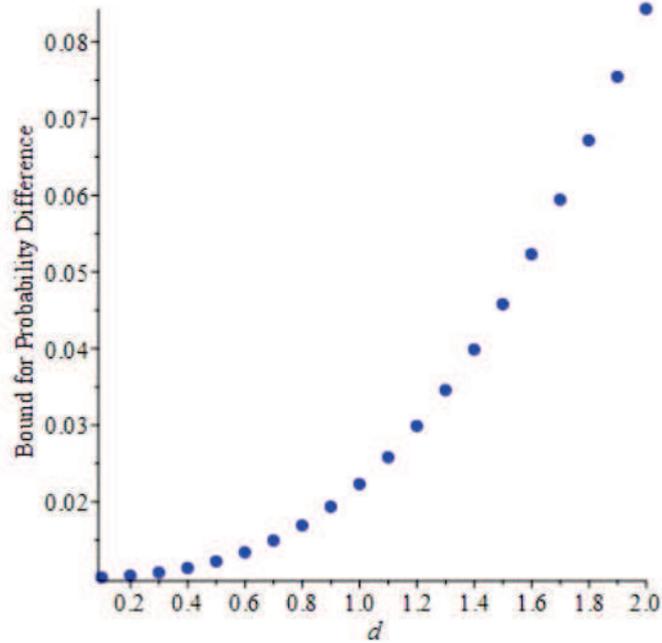}
\caption{Bound of probability difference \eqref{estimatedev} with respect to $d$ \ ($a = 1$, $p = 0.5$).}
\label{fig:estimatedev_d}
\end{figure}

\begin{figure}[h]
\includegraphics[scale=1]{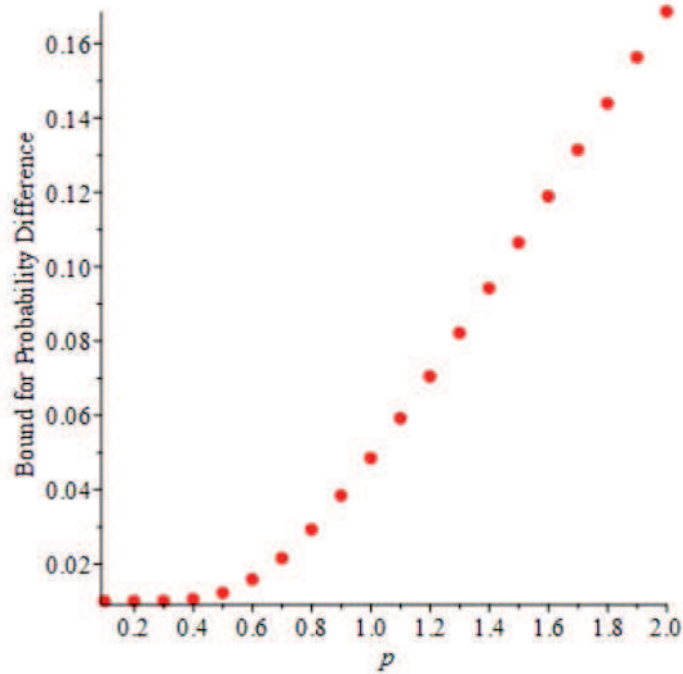}
\caption{Bound of probability difference \eqref{estimatedev} with respect to $p$ \ ($a = 1$, $d = 0.5$).}
\label{fig:estimatedev_p}
\end{figure}


\section{Conclusion and Future Work} \label{sec:conclusionandfuturework}

We have shown that the generalized gamma distribution for the right choice of parameters, meaning a relatively small $p$ and $d$, conforms well to Benford's Law for the leading digit. It would be interesting to see where we can find an application of this result, considering how common the generalized gamma distribution is in nature. 

A possible research avenue would be to perform a similar analysis for other families of distributions. 




\appendix

\section{Details of Proof and Mathematical Machinery Used} \label{appendix:details_of_proof}

In the proof of Theorem~\ref{thm:main}, we claimed the following.
\begin{enumerate}
    \item The cdf of $\log_B X \bmod 1$ \eqref{logpdfderv1} converges pointwise for all $u \in [0,1)$, it is differentiable for all $u \in (0,1)$, and we can differentiate it term by term.
    \item Poisson summation can be applied to the pdf of $\log_B X \bmod 1$ as stated in~\eqref{logpdfresult}.
\end{enumerate}

We use the following results to prove these facts. Many of these proofs are standard and the arguments below are provided in order to give the reader a quick overview of the key ideas. 


\begin{thm}(\textbf{Weierstrass M-test}) \label{MTest}
Let $\{f_n\}_{n=1}^{\infty}$ be a sequence of real valued functions on a set $X$, suppose each $|f_n|$ is bounded by $M_n \geq 0$, then if $\sum_{n=1}^{\infty} M_n$ converges, $\sum_{n=1}^{\infty}f_n$ converges uniformly.
\end{thm}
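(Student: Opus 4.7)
The plan is to derive uniform convergence of $\sum_{n=1}^{\infty} f_n$ on $X$ by verifying the Cauchy criterion for uniform convergence: it suffices to show that for every $\epsilon > 0$ there is an index $N$ such that $\left|\sum_{k=n+1}^{m} f_k(x)\right| < \epsilon$ holds for all $m > n \geq N$ \emph{simultaneously} for every $x \in X$. This reduces the problem to controlling the tails of the partial sums uniformly in $x$, and the hypothesis $|f_n| \leq M_n$ is perfectly suited for this because $M_n$ does not depend on $x$.

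First, I would fix $\epsilon > 0$ and use the fact that $\sum_{n=1}^{\infty} M_n$ is a convergent series of nonnegative real numbers. By the Cauchy criterion in $\mathbb{R}$, there exists $N$ (depending only on $\epsilon$) such that $\sum_{k=n+1}^{m} M_k < \epsilon$ for all $m > n \geq N$. The key feature to keep track of is that this $N$ depends on $\epsilon$ alone and not on any point of $X$.

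Next, for any $x \in X$ and any $m > n \geq N$, I would apply the triangle inequality together with the domination $|f_k(x)| \leq M_k$ to obtain
\[
\left|\sum_{k=n+1}^{m} f_k(x)\right| \;\leq\; \sum_{k=n+1}^{m} |f_k(x)| \;\leq\; \sum_{k=n+1}^{m} M_k \;<\; \epsilon.
\]
Because the right-hand bound is independent of $x$, the Cauchy criterion for uniform convergence is verified, and hence $\sum_{n=1}^{\infty} f_n$ converges uniformly on $X$.

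There is no genuine obstacle in this argument; the proof is short and classical. The one subtlety worth emphasizing—rather than a computational difficulty—is the realization that the domination of $|f_n(x)|$ by a \emph{constant in $x$} translates the pointwise tail bound coming from $\sum M_n$ into a bound that is uniform in $x$, which is precisely the feature that promotes pointwise convergence to uniform convergence.
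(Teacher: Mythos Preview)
Your proof is correct and is the standard argument via the uniform Cauchy criterion. The paper itself does not supply a proof of this theorem; it merely states the Weierstrass $M$-test as a classical tool (noting that ``many of these proofs are standard'') and then invokes it to establish uniform convergence of the series appearing in the derivation of $f_{\log_B X \bmod 1}$.
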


\begin{thm} \label{termdiff}
If $\{f_n\}_{n=1}^{\infty}$ is a sequence of $C^1$ functions on $(a,b)$, if both $\sum_{n=1}^{\infty}f_n$ and $\sum_{n=1}^{\infty}f'_n$ converge uniformly, then $\sum_{n=1}^{\infty}f_n$ is differentiable and $(\sum_{n=1}^{\infty}f_n)' \ = \ \sum_{n=1}^{\infty}f'_n$.
\end{thm}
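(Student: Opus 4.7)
The plan is to prove that $F := \sum_{n=1}^\infty f_n$ is differentiable with derivative $G := \sum_{n=1}^\infty f'_n$ by reducing the problem to an application of the Fundamental Theorem of Calculus, which is available because each $f_n \in C^1$. The key idea is that for every finite partial sum $S_N := \sum_{n=1}^N f_n$, one has $S_N(x) - S_N(x_0) = \int_{x_0}^x S'_N(t)\,dt$ on $(a,b)$, and uniform convergence then lets us pass this identity to the limit.

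First I would fix an arbitrary $x_0 \in (a,b)$ and an arbitrary $x \in (a,b)$. Since each $f'_n$ is continuous on the compact interval between $x_0$ and $x$, and since $\sum f'_n$ converges uniformly on $(a,b)$ by hypothesis (hence on any subinterval), the limit function $G$ is continuous there (uniform limit of continuous functions). Applying FTC to each $f_n$ and summing from $n=1$ to $N$ yields
\[
S_N(x) - S_N(x_0) \ = \ \int_{x_0}^{x} S'_N(t)\,dt.
\]
Next I would pass $N \to \infty$ on both sides. The left side converges to $F(x) - F(x_0)$ by pointwise (in fact uniform) convergence of $S_N$ to $F$. For the right side, uniform convergence of $S'_N \to G$ on the interval of integration permits the interchange of limit and integral (a standard consequence of the estimate $|\int_{x_0}^x (S'_N - G)\,dt| \le |x - x_0|\cdot \sup|S'_N - G|$), yielding $\int_{x_0}^x G(t)\,dt$. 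Hence
\[
F(x) - F(x_0) \ = \ \int_{x_0}^{x} G(t)\,dt.
\]

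Finally, since $G$ is continuous on $(a,b)$, the right-hand side is a differentiable function of $x$ with derivative $G(x)$, again by FTC. Therefore $F$ is differentiable at every $x \in (a,b)$ with $F'(x) = G(x)$, which is exactly the claim $\bigl(\sum_{n=1}^\infty f_n\bigr)'(x) = \sum_{n=1}^\infty f'_n(x)$.

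The main obstacle, if any, is really just the justification that the limit and integral commute when $S'_N \to G$ uniformly; this is where the uniform (as opposed to merely pointwise) convergence of $\sum f'_n$ is essential, and it is also what guarantees that $G$ is continuous so that the second application of FTC is legal. Everything else is bookkeeping around the fact that finite sums of $C^1$ functions are $C^1$ and that FTC applies termwise to any finite partial sum.
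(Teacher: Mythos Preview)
Your proof is correct and is the standard textbook argument via the Fundamental Theorem of Calculus. Note, however, that the paper does not actually prove this theorem: it is stated in the appendix as a standard analysis result (alongside the Weierstrass $M$-test and Poisson summation) and invoked without proof, so there is no ``paper's own proof'' to compare against. Your approach is exactly the one a reader would supply if asked to fill in the details.
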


\begin{thm}(\textbf{Poisson Summation}) \label{poissum}
Let $f$, $f'$ and $f''$ be continuous functions which eventually decay at least as fast as $x^{-(1+\eta)}$ for some $\eta > 0$, then
\bea \label{eqn:poissum}
\sum_{n=-\infty}^{+\infty} f(n) & \ = \ & \sum_{n=-\infty}^{+\infty} \hat{f}(n),
\eea
where $\hat{f}(y) \ = \ \int_{-\infty}^{+\infty}f(x)e^{-2\pi xyi}dx$ is the Fourier transformation of $f$.
\end{thm}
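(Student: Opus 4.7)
The plan is to prove the Poisson summation formula via the classical periodization-and-Fourier-series argument, drawing only on the two tools (Weierstrass M-test and term-by-term differentiation) already listed in the appendix.

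First, I would introduce the periodization $F(x) := \sum_{n\in\mathbb{Z}} f(x+n)$. The decay hypothesis $|f(x)| = O(|x|^{-(1+\eta)})$, combined with the Weierstrass M-test (Theorem~\ref{MTest}), gives that this series converges uniformly on compact subsets of $\mathbb{R}$, so $F$ is continuous and manifestly $1$-periodic. Applying the same M-test to $\sum_n f'(x+n)$ and $\sum_n f''(x+n)$, and then invoking Theorem~\ref{termdiff} twice to differentiate term by term, shows that $F \in C^2(\mathbb{R})$ with $F'(x) = \sum_n f'(x+n)$ and $F''(x) = \sum_n f''(x+n)$.

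Second, since $F$ is $C^2$ and $1$-periodic, standard Fourier theory on the circle gives that its Fourier series converges absolutely and uniformly to $F$, with coefficients $c_k = \int_0^1 F(x)\,e^{-2\pi i k x}\,dx$. I would compute these coefficients by unfolding: use uniform convergence to swap the sum defining $F$ with the integral over $[0,1]$, substitute $y = x+n$ so that the $n$-th integral becomes an integral over $[n,n+1]$ (noting $e^{-2\pi i k n} = 1$), and reassemble the pieces into a single integral over $\mathbb{R}$, obtaining $c_k = \hat{f}(k)$. Evaluating the pointwise identity $F(x) = \sum_k \hat{f}(k)\,e^{2\pi i k x}$ at $x = 0$ then gives
\[
\sum_{n\in\mathbb{Z}} f(n) \;=\; F(0) \;=\; \sum_{k\in\mathbb{Z}} \hat{f}(k),
\]
which is the claim.

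The main obstacle is convergence bookkeeping rather than any deep idea. In particular, one must justify that the Fourier series of $F$ converges to $F$ \emph{pointwise everywhere}, not merely in $L^2$; this is exactly what the $C^2$ hypothesis (i.e., the $x^{-(1+\eta)}$ decay on $f''$) buys. Indeed, two integrations by parts in the definition of $\hat{f}(y)$, together with the decay of $f$ and $f'$ which kills the boundary terms, yield $|\hat{f}(k)| = O(k^{-2})$, so the right-hand side converges absolutely and the identity holds as a genuine (not merely symmetric) bilateral sum. A secondary care point is the swap of sum and integral when computing $c_k$, which requires uniform convergence of $\sum_n f(x+n)$ on $[0,1]$ and is again handled by the M-test applied to the tail beyond a large $|n|$.
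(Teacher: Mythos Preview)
Your argument is the standard periodization-and-Fourier-series proof of Poisson summation, and it is correct as written. The decay hypotheses on $f$, $f'$, $f''$ are used exactly where they should be: the M-test for uniform convergence of the periodization and its derivatives, the vanishing of boundary terms in the two integrations by parts, and the resulting $O(k^{-2})$ bound on $\hat f(k)$ that forces absolute convergence of the Fourier series.

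There is, however, nothing to compare against: the paper does not prove this theorem. It is listed in the appendix alongside the Weierstrass M-test and the term-by-term differentiation theorem as one of the ``standard'' results quoted without proof, and is then invoked as a black box (``Our second claim above is a direct result of Theorem~\ref{poissum}''). So your write-up would actually \emph{add} a proof where the paper has none, and the route you chose---using precisely Theorems~\ref{MTest} and~\ref{termdiff} as the only analytic inputs---fits the appendix seamlessly.
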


Our second claim above is a direct result of Theorem~\ref{poissum}. For the first claim to be true, we need to check~\eqref{logpdfderv1} satisfies the conditions in Theorem~\ref{termdiff}.   
\begin{enumerate}
    \item The term in the sum of \eqref{logpdfderv1},
    \bea \label{termincdf}
    \int_{\left(\frac{B^k}{a}\right)^p}^{\left(\frac{B^{k+u}}{a}\right)^p}t^{\frac{d}{p} - 1}e^{-t} dt
    \eea
    is $C^1$ in $(0,1)$.
    \item $F_{\log_B X \bmod 1}$ in \eqref{logpdfderv1} converges uniformly in $(0,1)$.
    \item $f_{\log_B X \bmod 1}$ in \eqref{logpdfresult} converges uniformly in $(0,1)$.
\end{enumerate}
1) is easy to check. The main reason for the uniform convergence of $F_{\log_B X \bmod 1}$ 
and $f_{\log_B X \bmod 1}$ is the fast-decay $e^{-u}$ like term in both of them. Next we 
give a sketch for part of the proof for this. We will show that if $d \geq p$, $F_{\log_B X \bmod 1}$ converges uniformly in $(0,1)$. All other cases could be checked similarly.

\begin{proof} (\textit{Sketch}) For any $u \in (0,1)$, if $d \geq p$, it's easy to check 
$$t^{\frac{d}{p} - 1}e^{-t} \ \leq \  \left(\frac{B^{k+1}}{a}\right)^{d - p}e^{-\left(\frac{B^k}{a}\right)^p},$$
for all $t \in \left[\left(\frac{B^k}{a}\right)^p, \left(\frac{B^{k+u}}{a}\right)^p\right]$, so we have  
\bea
\int_{\left(\frac{B^k}{a}\right)^p}^{\left(\frac{B^{k+u}}{a}\right)^p}t^{\frac{d}{p} - 1}e^{-t} dt  \ \leq \  \left(\frac{B^{k+1}}{a}\right)^{d - p}e^{-\left(\frac{B^k}{a}\right)^p}\left(\frac{B^{k+1}}{a}\right)^{d} & \ \leq \ & \left(\frac{B^{k+1}}{a}\right)^{d}e^{-\left(\frac{B^k}{a}\right)^p}.\nonumber
\eea
Apply this result to the sum in \eqref{logpdfderv1}, we get 
\bea \label{cdfconverge}
\Gamma\left(\frac{d}{p}\right)F_{\log_B X \bmod 1} & \ = \ &
\sum_{k=-\infty}^{\infty}\int_{\left(\frac{B^k}{a}\right)^p}^{\left(\frac{B^{k+u}}{a}\right)^p}t^{\frac{d}{p} - 1}e^{-t} dt\nonumber\\ 
& \ \leq \ & \sum_{k=-\infty}^{\infty} \left(\frac{B^{k+1}}{a}\right)^{d}e^{-\left(\frac{B^k}{a}\right)^p}\nonumber \\
& \ \leq \ & \sum_{k=0}^{\infty} \left(\frac{B^{k+1}}{a}\right)^{d}e^{-\left(\frac{B^k}{a}\right)^p} + \sum_{k=1}^{\infty}\left(\frac{B}{aB^k}\right)^{d}e^{-\left(\frac{1}{aB^k}\right)^p}. 
\eea

The two sums in \eqref{cdfconverge} are convergent by the integration test, then by Weierstrass M-test, i.e. Theorem~\ref{MTest}, we know the original sum in~\eqref{logpdfderv1} converges uniformly.
\end{proof}




\section{Simulation Code} \label{sec:codeused}

\noindent\textbf{R script: Sample from a Generalized Gamma Distribution and compare the first-digit frequencies of the data with values predicted by Benford's law}\\
\begin{lstlisting}[language=R]
N=10000
a = 2
d = 1/2
p = 1/2
B <- 10 # B should be an integer greater than 1

sample <- as.vector(qgamma(runif(N), shape=d/p, scale=a^p)^(1/p))

for (i in 1:N) {
  while (sample[i] < 1 | sample[i] >= B) {
    if (sample[i] < 1) {
      sample[i] <- sample[i] * B
    } else {
      sample[i] <- sample[i] / B
    }
  }
  sample[i] <- trunc(sample[i])
}

freqs <- as.numeric(table(sample))

error <- 0.0
freqs_t <- vector("list", B - 1)
freqs_t <- unlist(freqs_t)

for (i in 1:(B-1)) {
  freqs_t[i] <- logb((i + 1) / i, base=B)
  error <- error + (freqs[i] / N - freqs_t[i])^2
}

freqs <- freqs / N
d <- 1:(B-1)

plot(d, freqs, las=1, xlab="Digit", ylab="Freq", col="red", xaxt="n")
axis(1, at=1:(B-1), labels=1:(B-1))
points(d, freqs_t, col="green")
legend(B - 3, max(c(freqs, freqs_t)) * 0.95, 
legend=c("theory", "experiment"), 
col=c("green", "red"), pch=c(21, 21))
\end{lstlisting}
~\\
\noindent\textbf{Maple code: Plot pdfs and calculate probability deviation bound}
\begin{lstlisting}
restart;
with(plots):

g_k:=2/GAMMA(d/p)*GAMMA(d/p-2*Pi*I*k/p/ln(B))
*exp(2*Pi*I*u- 2*Pi*I*ln(a)/ln(B));
f_M := 1 + sum(Re(g_k), k=1..M);
M := ceil(((d + p)*ln(B))^2/2/Pi/Pi/e - 1);

# set pointwise error bound
e := 0.01; 

# set parameters and plot
B:=10; a:=1; d:=1/2; p:=1/2;
p1 := plot(f_M, u=0..1, color="red", 
labels=[u, f], legend="a = 1, d = p = 1/2"):

B:=10; a:=1; d:=1; p:=1/2;
p2 := plot(f_M, u=0..1, color="green", 
labels=[u, f], legend="a = d = 1, p = 1/2"):

B:=10; a:=1; d:=1/2; p:=1;
p3 := plot(f_M, u=0..1, color="yellow", 
labels=[u, f], legend="a = p = 1, d = 1/2"):

B:=10; a:=10; d:=1/2; p:=1/2;
p4 := plot(f_M, u=0..1, color="blue", 
labels=[u, f], legend="a = 10, d = p = 1/2"):

display(p1, p2, p3, p4, legendstyle = [font=["HELVETICA", 12]
, location=bottom]);

# calculate bounds for probability difference under different parameters
# d = p = 0.5, a changes from 1 to 10
# unassign('a'); d := 0.5; p := 0.5;
# points:={seq([a, Optimization[Maximize]
(abs(f_M - 1),u = 0..1)[1] + e], a=1..10)};
# pointplot(points, symbol=solidcircle, symbolsize = 15, 
color =orange, labels=["a", "Bound for Probability Difference"], 
labeldirections=[ "horizontal", "vertical"]);

# a = 1, p = 0.5, d changes from 0.1 to 2
unassign('d'); a := 1; p := 0.5;
ds := seq(n/10, n=1..20);
points:={seq([d, Optimization[Maximize]
(abs(f_M - 1),u = 0..1)[1] + e], d=ds)};
pointplot(points, symbol=solidcircle, symbolsize = 15, 
color =blue, labels=[d, "Bound for Probability Difference"], 
label directions=[ "horizontal", "vertical"]);


# a = 1, d = 0.5, p changes from 0.1 to 2
unassign('p'); a := 1; d := 0.5;
ps := seq(n/10, n=1..20);
points:={seq([p, Optimization[Maximize]
(abs(f_M - 1),u = 0..1)[1] + e], p=ps)};
pointplot(points, symbol=solidcircle, symbolsize = 15, 
color =red, labels=[p, "Bound for Probability Difference"],
label directions=[ "horizontal", "vertical"]);
\end{lstlisting}
~\\
\noindent\textbf{Mathematica code for the Kolmogorov-Smirnov test}
\begin{lstlisting}[language = Mathematica]
Clear[Diff]
Diff[a_, d_, p_, B_] := KolmogorovSmirnovTest[Mod[Log[B, 
Random Variate[GammaDistribution[d, a, p, 0], 10^4]], 1], 
Uniform Distribution[], "TestStatistic"]
ContourPlot[Diff[1, d, p, 10], {d, 0.2, 2}, {p, 0.2, 2}, 
Frame Label -> Automatic, PlotLegends -> Automatic]
\end{lstlisting}

\newpage

\end{document}